\newcommand{\RN}[1]{
  \textup{\uppercase\expandafter{\romannumeral#1}}
}
\begin{document}

\title*{On the connection between the Goldbach conjecture and the Elliott-Halberstam conjecture}
\titlerunning{Goldbach conjecture and Elliott-Halberstam conjecture}

\author{Jing-Jing Huang and Huixi Li}

\institute{Jing-Jing Huang \at Department of Mathematics and Statistics, University of Nevada, Reno, 1664 N Virginia St, Reno, NV 89557 \\ \email{jingjingh@unr.edu}
\and Huixi Li \at Department of Mathematics and Statistics, University of Nevada, Reno, 1664 N Virginia St, Reno, NV 89557 \\ \email{huixil@unr.edu}}

\maketitle

\hfill \textit{Dedicated to Professor Melvyn Nathanson on the occasion of his 75th birthday.} 

\ \\

\abstract{\,\,In this paper we prove that the binary Goldbach conjecture for sufficiently large even integers would follow under the assumption that both the Elliott-Halberstam conjecture and a variant of the Elliott-Halberstam conjecture twisted by the M\"{o}bius function hold, in which the sum of their levels of distribution exceeds~1. This continues the work of Pan in 1982. An analogous result for the twin prime conjecture has been obtained by Ram Murty and Vatwani in 2017. } 

\ \\

\noindent \textbf{Keywords} \,\,Goldbach conjecture  $\cdot$ Elliott-Halberstam conjecture $\cdot$ Level of distribution $\cdot$ M\"{o}bius disjointness

\section{Introduction}
The well-known Chen's theorem on the Goldbach conjecture states that every sufficiently large even integer $N$ can be written as the sum of two positive integers $p$ and~$q$, where $p$ is a prime and $q$ is an almost prime with at most two prime divisors~\cite{Chen}. The barrier from Chen's theorem to the Goldbach conjecture has been well known as the parity problem in sieve theory: one can not tell whether $q = N - p$ has exactly one or two prime divisors. 

Let $r(N)$ be the number of representations of $N$ as the sum of two primes, and  let $\widetilde{r}(N)$ denote the weighted number of representations
\[
\widetilde{r}(N)=\sum_{n < N} \Lambda(n) \Lambda(N - n),
\]
where the von Mangoldt function $\Lambda(n)$ is defined as
\[
\Lambda(n) = 
\begin{cases}
\log p, & \text{if } n = p^\ell \text{ for some prime $p$ and integer $\ell \geq 1$}, \\  
0, & \text{otherwise. }
\end{cases}
\]
It is easy to see~\cite[Page 555]{Pan} that  
\begin{equation*}
r(N) = \frac{\widetilde{r}(N)}{(\log N)^2}
\left(1 + O \left( \frac{\log \log N}{\log N} \right) \right) + O \left( \frac{N}{(\log N)^3} \right).
\end{equation*}
Since it is somewhat more convenient to deal with the smoothed sum $\widetilde{r}(N)$ rather than $r(N)$ itself, we will only focus on the study of the former in this paper. 

By a heuristic argument based on the circle method, Hardy and Littlewood~\cite{HardyLittlewood} have conjectured that
\begin{equation}\label{asymp}
\widetilde{r}(N)\sim \mathfrak{S}(N)N,
\end{equation}
where
\begin{equation}\label{frakSN}
\mathfrak{S}(N) 
=
\begin{cases}
2 \prod_{p > 2} \left( 1 - \frac{1}{(p - 1)^2} \right)
\prod_{\substack{p \mid N \\ p > 2}} \left( 1 + \frac{1}{p - 2} \right), & \text{if $N$ is even, } \\
0, & \text{if $N$ is odd. }
\end{cases}
\end{equation}
Pan~\cite{Pan} has made a new attempt on the Goldbach conjecture, which gives an alternative way to suggest the expected asymptotic formula~\eqref{asymp}. In some sense, Pan's approach is more direct and elementary than the Hardy-Littlewood circle method. Pan's main result~\cite[Theorem \RN{1}, Theorem \RN{2}]{Pan} states that for sufficiently large even integers $N$ and for $Q = N^{\frac{1}{2}} (\log N)^{-20}$,
we have
\[
\widetilde{r}(N)
= R_0(N)
+
R_1(N) + R_2(N) + R_3(N) + O\left( \frac{N}{\log N} \right), 
\]
where
\begin{equation}\label{R0}
R_0(N) = \sum_{n < N} 
\left( 
\sum_{\substack{d_1 \mid n \\ d_1 \leq Q}} \mu(d_1) \log d_1
\right) 
\left(
\sum_{\substack{d_2 \mid (N - n) \\ (d_2, N) = 1 \\ d_2 \leq Q}} \mu(d_2) \log d_2
\right)
= \mathfrak{S}(N) N + O\left( \frac{N}{\log N} \right), 
\end{equation}
\begin{equation}\label{R1}
R_1(N) = \sum_{n < N} 
\left( 
\sum_{\substack{d_1 \mid n \\ d_1 \leq Q}} \mu(d_1) \log d_1
\right) 
\left(
\sum_{\substack{d_2 \mid (N - n) \\ (d_2, N) = 1 \\ d_2 > Q}} \mu(d_2) \log d_2
\right)
, 
\end{equation}
\begin{equation}\label{R2}
R_2(N) = \sum_{n < N} 
\left( 
\sum_{\substack{d_1 \mid n \\ d_1 > Q}} \mu(d_1) \log d_1
\right) 
\left(
\sum_{\substack{d_2 \mid (N - n) \\ (d_2, N) = 1 \\ d_2 \leq Q}} \mu(d_2) \log d_2
\right), 
\end{equation}
and 
\begin{equation}\label{R3}
R_3(N) = \sum_{n < N} 
\left( 
\sum_{\substack{d_1 \mid n \\ d_1 > Q}} \mu(d_1) \log d_1
\right) 
\left(
\sum_{\substack{d_2 \mid (N - n) \\ (d_2, N) = 1 \\ d_2 > Q}} \mu(d_2) \log d_2
\right). 
\end{equation}
Furthermore, by the Bombieri-Vinogradov theorem, we have
\[
R_i(N)= O\left( \frac{N}{\log N} \right),\quad i=1, 2. 
\]

As Pan points out, the difficulty in the Goldbach conjecture  lies in the study of $R_3(N)$. Moreover, the Hardy-Littlewood conjecture reduces to showing that $R_3(N)$ is  inferior to the expected main term $\mathfrak{S}(N) N$. Our main purpose of this paper is to show that the Goldbach conjecture for sufficiently large even integers follows under the assumptions that the Elliott-Halberstam conjecture holds with level of distribution $\theta$ and that a variant of the Elliott-Halberstam conjecture twisted by the M\"{o}bius function holds with level of distribution $\theta'$, where $\theta+\theta'>1$.

An analogous result in the twin prime conjecture setting has been obtained by Ram Murty and Vatwani~\cite{MuV}. This perhaps is not surprising since we know the Goldbach conjecture and the twin prime conjecture are closely connected. For example, Chen has also proved in~\cite{Chen} that for every even integer $h \neq 0$, there are infinitely many pairs of integers $(p, q)$, where $p$ is a prime and $q = p + h$ is an almost prime with at most two prime divisors. Again, the barrier from this theorem to the twin prime conjecture is caused by the parity problem. There have been many attempts to break the parity barrier in different settings~\cite{FI, FrIw, FrIw2, Pintz, MuV}. 

Ram Murty and Vatwani formulate in~\cite{MuV} a conjecture regarding the equidistribution of the M\"{o}bius function over shifted primes in arithmetic progressions (cf.~\cite[(1.4)]{MuV}), and they prove that such a conjecture for a fixed even integer $h \neq 0$, in conjunction with the Elliott-Halberstam conjecture, can remove the parity barrier and produce infinitely many pairs of primes $(p, p + h)$. Note that if we assume the Elliott-Halberstam conjecture holds, i.e., the conjecture $EH(N^\theta)$ introduced below holds for all $\theta < 1$, 
then it is proved in~\cite{Maynard} that there are infinitely many pairs of consecutive primes that differ by at most 12. Moreover, the bound has been improved in~\cite{Polymath} to 6  assuming a generalized Elliott-Halberstam conjecture (cf.~\cite[Claim~2.6]{Polymath}). Unconditionally, after the sensational breakthrough of Yitang Zhang~\cite{Zhang} and subsequent work of Maynard~\cite{Maynard}, the best known bound on small gaps between primes has been reduced to  246 by the Polymath group~\cite{Polymath}.  

To put the main result in perspective, we state the Elliott-Halberstam conjecture and a variant of it twisted by the M\"{o}bius function. 

\noindent \textbf{The Elliott-Halberstam conjecture $EH(N^\theta(\log N)^C)$}. 
 For any $A > 0$, we have
\[
\sum_{q \leq N^\theta(\log N)^C} 
\max_{y \leq N}
\max_{\substack{(a, q) = 1}}
\left| 
\sum_{\substack{n \leq y \\ n \equiv a \pmod{q}}} \Lambda(n) - \frac{y}{\phi(q)}
\right| 
\ll \frac{N}{(\log N)^A}. 
\]

\noindent \textbf{A variant of the Elliott-Halberstam conjecture twisted by the M\"{o}bius function $EH_{\mu}(N^\theta)$}.  For any $A > 0$, we have
\small
\[
\sum_{q \leq N^\theta} 
\max_{y < N}
\max_{\substack{(a, q) = 1}}
\left| 
\sum_{\substack{n \leq y \\ n \equiv a \pmod{q}}} \Lambda(n) \mu(N - n)
- \frac{1}{\phi(q)} \sum_{n \leq y} \Lambda(n) \mu(N - n)
\right| 
\ll \frac{N}{(\log N)^A}. 
\]
\normalsize

Now we are poised to state our main theorem.
\begin{theorem}\label{maintheorem}
For a given $A>0$, suppose that $EH(N^\theta (\log N)^{2A+8})$ and $EH_{\mu}(N^{1 - \theta})$ are true for some constant $0 < \theta < 1$. Then for all positive even integers $N$, we have
\[
\widetilde{r}(N)
\geq 
\mathfrak{S}(N) (1 - \mathcal{A}(N)) N
+ O\left( \frac{N}{(\log N)^A} \right), 
\]
where 
\begin{equation}\label{Ah}
\mathcal{A}(N) = \prod_{\substack{p \nmid N \\ p > 2}}
\left( 1 - \frac{1}{p(p - 1)} \right)
\end{equation}
and the implicit constant depends on $A$ and $\theta$.
Moreover, the assertions $\widetilde{r}(N) \sim \mathfrak{S}(N) N$ and $\sum_{n < N} \Lambda(n) \mu(N - n) = o(N)$ are equivalent. \end{theorem}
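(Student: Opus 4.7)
The plan is to establish, under the two hypotheses, the explicit identity
\[
\widetilde{r}(N)\ =\ \mathfrak{S}(N)N\ -\ \mathfrak{S}(N)\mathcal{A}(N)\sum_{n<N}\Lambda(n)\mu(N-n)\ +\ O\!\bigl(N/(\log N)^A\bigr).
\]
Once this is secured, the lower bound claimed in the theorem follows from the trivial estimate $\sum_{n<N}\Lambda(n)\mu(N-n)\le\sum_{n<N}\Lambda(n)\le N(1+o(1))$, and the equivalence in the second assertion follows since $\mathfrak{S}(N)\mathcal{A}(N)$ is bounded away from zero for every positive even $N$.

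\textbf{Reduction to a single sum.} By Pan's decomposition and the estimates $R_0(N)=\mathfrak{S}(N)N+O(N/\log N)$ and $R_1(N),R_2(N)=O(N/\log N)$ recalled in the excerpt, one has $\widetilde{r}(N)=\mathfrak{S}(N)N+R_3(N)+O(N/\log N)$, so only $R_3(N)$ needs attention. Applying the Möbius identity $\sum_{d_1\mid n,\,d_1>Q}\mu(d_1)\log d_1=-\Lambda(n)-\sum_{d_1\mid n,\,d_1\le Q}\mu(d_1)\log d_1$ to the outer inner sum of $R_3$ and interchanging the order of summation, I obtain $R_3(N)=-\Sigma(N)-R_1(N)=-\Sigma(N)+O(N/\log N)$, where
\[
\Sigma(N)\ :=\ \sum_{\substack{d>Q\\(d,N)=1}}\mu(d)\log d\sum_{\substack{n<N\\n\equiv N\pmod{d}}}\Lambda(n).
\]
It thus suffices to evaluate $\Sigma(N)$ as $\mathfrak{S}(N)\mathcal{A}(N)\sum_n\Lambda(n)\mu(N-n)+O(N/(\log N)^A)$.

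\textbf{Splitting at $N^\theta$ and applying the two conjectures.} I split $\Sigma(N)=\Sigma_\le(N)+\Sigma_>(N)$ according to $Q<d\le N^\theta$ versus $d>N^\theta$. For $\Sigma_\le$, the hypothesis $EH(N^\theta(\log N)^{2A+8})$, after partial summation to absorb the $\log d$ weight, replaces the inner sum by $N/\phi(d)$ with total error $O(N/(\log N)^A)$. For $\Sigma_>$, I perform the Dirichlet hyperbola flip $N-n=de$, which converts the constraint $d>N^\theta$ into $e<N^{1-\theta}$. Using the identity $\mu(d)=\mu(e)\mu(N-n)$, valid whenever $N-n$ is squarefree (the non-squarefree contribution being $O(N/(\log N)^A)$ by standard Brun-Titchmarsh-type estimates combined with the coprimality structure), $\Sigma_>(N)$ reassembles, after splitting $\log((N-n)/e)=\log(N-n)-\log e$, as
\[
\sum_{\substack{e<N^{1-\theta}\\(e,N)=1}}\mu(e)\!\!\sum_{\substack{n<N\\n\equiv N\pmod{e}}}\!\!\Lambda(n)\mu(N-n)\bigl(\log(N-n)-\log e\bigr),
\]
up to the truncation error from the condition $(N-n)/e>N^\theta$. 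Now $EH_\mu(N^{1-\theta})$ applies, replacing the innermost sum by $\phi(e)^{-1}\sum_n\Lambda(n)\mu(N-n)$ and its $\log(N-n)$-weighted analogue with total error $O(N/(\log N)^A)$, thereby factoring $\sum_n\Lambda(n)\mu(N-n)$ out of the large-$d$ piece.

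\textbf{Euler product evaluation and principal obstacle.} The remaining main terms arising from $\Sigma_\le$ and $\Sigma_>$ are sums of the Mertens type involving $\mu(\cdot)/\phi(\cdot)$ and $\mu(\cdot)\log(\cdot)/\phi(\cdot)$ over integers coprime to $N$. Their Euler factorizations, matched prime by prime with the Hardy-Littlewood factors of $\mathfrak{S}(N)$, produce at each $p\nmid N$, $p>2$ the local factor $1-1/(p(p-1))$, which is precisely the product defining $\mathcal{A}(N)$; together with the standard singular series computation, this yields the targeted coefficient $\mathfrak{S}(N)\mathcal{A}(N)$ and establishes the identity. The principal technical obstacle is the clean matching of these local Euler factors through the hyperbola flip -- in particular, tracking the coprimality conditions $(d,N)=1$, $(e,N)=1$, and $(N-n,N)=1$ so that exactly the product $\prod_{p\nmid N,\,p>2}(1-1/(p(p-1)))$ emerges -- together with the $\log$-weight bookkeeping throughout, which is precisely what dictates the strengthened hypothesis $EH(N^\theta(\log N)^{2A+8})$ in place of plain $EH(N^\theta)$.
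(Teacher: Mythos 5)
Your claimed identity
\[
\widetilde{r}(N)\ =\ \mathfrak{S}(N)N\ -\ \mathfrak{S}(N)\mathcal{A}(N)\sum_{n<N}\Lambda(n)\mu(N-n)\ +\ O\!\bigl(N/(\log N)^A\bigr)
\]
is wrong: the coefficient in front of $\sum_{n<N}\Lambda(n)\mu(N-n)$ is $-\mathfrak{S}(N)$, not $-\mathfrak{S}(N)\mathcal{A}(N)$. After your hyperbola flip $N-n=de$ and the split $\log((N-n)/e)=\log(N-n)-\log e$, the piece weighted by $\log e$ has main term
\[
-\sum_{\substack{e<N^{1-\theta}\\(e,N)=1}}\frac{\mu(e)\log e}{\phi(e)}\cdot\sum_{n<N}\Lambda(n)\mu(N-n),
\]
and by the Goldston--Y{\i}ld{\i}r{\i}m lemma (Lemma~\ref{GYlemma}) the inner $e$-sum tends to $-\mathfrak{S}(N)$, with no $\mathcal{A}(N)$ anywhere in sight. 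The factor $1-\tfrac{1}{p(p-1)}$ that defines $\mathcal{A}(N)$ is the local factor of $\sum_{(b,N)=1}\mu(b)/\phi(b^2)$, which never appears in the $\Sigma_>$ computation; your ``Euler factors matched prime by prime'' step conflates $\phi(d)$ with $\phi(d^2)$.

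This error is not cosmetic, because your lower-bound step depends on it. With the correct identity $\widetilde{r}(N)=\mathfrak{S}(N)\bigl(N-\sum_{n<N}\Lambda(n)\mu(N-n)\bigr)+O(N/(\log N)^A)$, the trivial bound $\bigl|\sum_{n<N}\Lambda(n)\mu(N-n)\bigr|\le N(1+o(1))$ only gives $\widetilde{r}(N)\ge O(N/(\log N)^A)$, which is useless. What is actually needed, and what you are missing, is the nontrivial bound
\[
\Bigl|\sum_{n<N}\Lambda(n)\mu(N-n)\Bigr|\ \le\ \sum_{n<N}\Lambda(n)\mu^2(N-n)\ =\ \mathcal{A}(N)N + O\!\bigl(N/(\log N)^{A+1}\bigr),
\]
proved by expanding $\mu^2(N-n)=\sum_{b^2\mid N-n}\mu(b)$, applying Bombieri--Vinogradov to the moduli $b^2\le N^{1/2-2\epsilon}$, and summing $\sum_{(b,N)=1}\mu(b)/\phi(b^2)=\mathcal{A}(N)$. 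This separate argument is where $\mathcal{A}(N)$ genuinely enters, and it is an essential step your proposal omits. (Two smaller concerns: your $\Sigma_>$ step inserts a squarefree restriction on $N-n$ ``by hand'' and asserts the non-squarefree error is small, but for the moduli $d>N^\theta$ the discarded terms involve $[d,b^2]$ with both factors large and this is not a routine Brun--Titchmarsh estimate; the paper sidesteps this by inserting $\mu^2(N-n)$ at the very start, before the $\Lambda_\alpha/\widetilde{\Lambda}_\alpha$ split. Also, in your route the $(\log N)^{2A+8}$ in $EH$ has no evident role, whereas in the paper it covers the extra moduli $[b^2,d]\le N^\theta(\log N)^{2A+8}$ that the $\mu^2$ insertion creates.)
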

The following corollary is an immediate consequence of our main theorem.
\begin{corollary}\label{coro}
Suppose that $EH(N^\theta (\log N)^{12+\epsilon})$ and $EH_{\mu}(N^{1 - \theta})$ are true for some constants $0 < \theta < 1$ and $\epsilon>0$. Then the Goldbach conjecture holds for all sufficiently large even integers, i.e., every sufficiently large even integer can be written as a sum of two prime numbers. In particular, in view of the Bombieri-Vinogradov theorem, the above conclusion holds if the conjecture $EH_\mu(N^{\theta'})$ is true for some $\theta' > \frac{1}{2}$.
\end{corollary}

Our work can be naturally regarded as a continuation of Pan~\cite{Pan} for the following reasons. While Pan truncates the sum over $d_1$ and $d_2$ at $N^{\frac12}(\log N)^{-20}$ due to the limitation of the Bombieri-Vinogradov theorem, it is however not necessary to do this as long as one can handle all the truncated sums properly. Moreover, Pan leaves the sum $R_3(N)$ untouched and in particular does not give any hint on how to approach it. The main thrust of the current memoir is to show that we can estimate $R_3(N)$ if we have some knowledge about the equidistribution of $\Lambda(n)\mu(N-n)$ in arithmetic progressions. 

As in~\cite{MuV}, we remark that the proof goes through if we only assume equidistribution in $EH(N^\theta (\log N)^{2A + 8})$ and $EH_\mu(N^{1 - \theta})$ for the fixed residue class $n \equiv N \pmod{q}$ instead of taking the maximum of all residue classes coprime to $q$. Also by assuming other variants of the Elliott-Halberstam conjecture, our argument should give a lower bound on the number of representations of a large integer $N$ as a linear combination $a p + b q$ of primes $p$ and $q$, where $a$ and $b$ are fixed positive integers, provided that there is no local obstruction.

The interested readers are referred to~\cite{Vatwani} for other types of Elliott-Halberstam conjectures and their relations to the twin prime conjecture. It remains to be seen whether the conjectures mentioned in~\cite{Vatwani} are related to the Goldbach conjecture. Finally, it is worth noting that Hua has proposed another elementary approach to the Goldbach conjecture~\cite{Hua}. 

In Section~\ref{lemmasection} we introduce some technical lemmas that will be applied later.
In Section~\ref{proofofmaintheorem} we prove Theorem~\ref{maintheorem} and Corollary~\ref{coro} after we first evaluate two important sums assuming the conjectures $EH(N^\theta (\log N)^{2A + 8})$ and $EH_\mu(N^{1 - \theta})$ hold. Throughout the paper, we will use  Vinogradov's symbol $f(x)\ll g(x)$ and Landau's symbol $f(x)=O(g(x))$ to mean there exists a constant $C$ such that $|f(x)|\le Cg(x)$. We use $\epsilon$ to denote any sufficiently small positive number, which may not necessarily be the same in each occurrence. 
\section{Preliminary Lemmata}\label{lemmasection}
Our first lemma is a result of Goldston and Y\i ld\i r\i m involving the singular series $\mathfrak{S}(N)$ defined in~\eqref{frakSN}.

\begin{lemma}[{\cite[Lemma 2.1]{GY}}] For any positive integer $N$ and any real number $R$ satisfying $\log N\ll\log R$, there exists an absolute positive constant $c_1$ such that 
\[
\sum_{\substack{d \leq R \\ (d, N) = 1}}
\frac{\mu(d)}{\phi(d)}
\log \left(\frac{R}{d}\right)
= 
\mathfrak{S}(N)
+ O\left( e^{-c_1 \sqrt{\log R}} \right)
\]
and 
\[
\sum_{\substack{d \leq R \\ (d, N) = 1}}
\frac{\mu(d)}{\phi(d)}
= 
O\left( e^{-c_1 \sqrt{\log R}} \right). 
\]
\label{GYlemma}
\end{lemma}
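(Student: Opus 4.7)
The plan is to interpret both sums as Perron integrals of the Dirichlet series
\[
F(s, N) = \sum_{\substack{d \geq 1 \\ (d, N) = 1}} \frac{\mu(d)}{\phi(d)\, d^s} = \prod_{p \nmid N} \left(1 - \frac{1}{(p - 1) p^s}\right),
\]
and to shift the contour into the classical zero-free region of $\zeta$. The first move is the factorization
\[
F(s, N) = \frac{H(s, N)}{\zeta(s+1)}, \qquad H(s, N) = \prod_{p \mid N} \left(1 - p^{-s-1}\right)^{-1} \prod_{p \nmid N} \left(1 - \frac{1}{(p - 1)(p^{s+1} - 1)}\right),
\]
whose second Euler product converges absolutely on $\Re s > -1/2$ since its local factors are $1 + O(p^{-s - 2})$. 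Consequently $H(s, N)$ is holomorphic on that half-plane and satisfies $|H(s, N)| \ll (\log N)^{O(1)}$ on vertical strips inside it, the bound on the first product coming from the Mertens-type estimate $\sum_{p \mid N} 1/p \ll \log \log N$.

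A direct calculation shows $H(0, N) = \mathfrak{S}(N)$. For $p \mid N$ with $p > 2$, the factor $p/(p-1)$ from $H$ matches $\bigl(1 - (p-1)^{-2}\bigr)\bigl(1 + (p-2)^{-1}\bigr) = p/(p-1)$ in \eqref{frakSN}. When $N$ is even, the prefactor $2$ in $\mathfrak{S}(N)$ is supplied by $(1 - 1/2)^{-1}$ in the $p \mid N$ product of $H$. For $p \nmid N$ with $p > 2$ both $H$ and $\mathfrak{S}$ contribute $1 - (p - 1)^{-2}$. Finally, when $N$ is odd the $p = 2$ local factor in the $p \nmid N$ product of $H$ is $1 - 1/(1 \cdot 1) = 0$, matching $\mathfrak{S}(N) = 0$ in that case.

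With these ingredients, the absolutely convergent identity
\[
\sum_{\substack{d \leq R \\ (d, N) = 1}} \frac{\mu(d)}{\phi(d)} \log(R/d) = \frac{1}{2\pi i} \int_{(\sigma)} F(s, N) \frac{R^s}{s^2} \, ds \qquad (\sigma > 0)
\]
is obtained from Perron's formula (the $s^{-2}$ secures absolute convergence and permits interchange with the Dirichlet series). I shift the contour to the curve $\Re s = -c / \log(|t| + 2)$, truncated at $|t| = T = \exp(c_2 \sqrt{\log R})$. Using the standard zero-free region bound $1/\zeta(s+1) \ll \log(|t|+2)$ together with $|R^s| \ll \exp\!\bigl(-c \log R / \log(|t|+2)\bigr)$ and optimizing $T$, the shifted-contour contribution is $O\!\bigl(\exp(-c_1 \sqrt{\log R})\bigr)$. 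The pole of the integrand at $s = 0$ is simple (because $1/\zeta(s+1) = s + O(s^2)$ there), and a brief Laurent expansion shows its residue equals $H(0, N) = \mathfrak{S}(N)$, yielding the first estimate. The second estimate follows by the same contour shift applied to the truncated Perron formula for $F(s, N) R^s / s$; now the integrand is regular at $s = 0$ since the simple zero of $1/\zeta(s+1)$ cancels the simple pole of $1/s$, no main term appears, and the same error bound $O\!\bigl(\exp(-c_1 \sqrt{\log R})\bigr)$ emerges. The only real obstacle is the control of $1/\zeta$ inside the classical zero-free region, which is exactly the input used for the prime number theorem; the $(\log N)^{O(1)}$ factors from $H$ are absorbed because the hypothesis $\log N \ll \log R$ gives $(\log N)^{O(1)} \ll \exp(\epsilon \sqrt{\log R})$.
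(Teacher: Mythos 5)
The paper does not prove this lemma; it is cited verbatim from Goldston--Y\i ld\i r\i m. There is therefore no in-house proof to compare against, but the authors do prove the closely related Lemma~2 (the twisted series with $g_N(d)$) by exactly the method you propose: express the partial sum as a Perron integral of a Dirichlet series that factors as (regular function)$/\zeta(s+1)$, shift the contour into the classical zero-free region, and collect the residue at $s=0$. Your overall architecture is therefore sound and consistent with the paper's style, and the key identities you record --- the factorization $F(s,N)=H(s,N)/\zeta(s+1)$, the local-factor computation giving $H(0,N)=\mathfrak S(N)$ including the vanishing at $p=2$ for odd $N$, and the cancellation of the $1/s$ pole against the simple zero of $1/\zeta(s+1)$ in the second sum --- are all correct.

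The one claim that needs to be tightened is the estimate $|H(s,N)|\ll(\log N)^{O(1)}$ on the shifted contour. On $\Re s=0$ this follows from $\sum_{p\mid N}1/p\ll\log\log N$ as you say, but your contour $\Re s=-c/\log(|t|+2)$ dips to a fixed negative real part near $t=0$. There the first product over $p\mid N$ is controlled by
\[
\log\Bigl|\prod_{p\mid N}(1-p^{-s-1})^{-1}\Bigr|\ll\sum_{p\mid N}p^{-\sigma-1},
\]
and for $\sigma$ bounded away from $0$ (say $\sigma\ge-1/4$) this sum can be as large as $\sum_{p\le c'\log N\log\log N}p^{-3/4}\asymp(\log N\log\log N)^{1/4}$, since $N$ may have $\asymp\log N/\log\log N$ distinct prime factors. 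So the honest bound is $|H(s,N)|\ll\exp\bigl(C(\log N)^{1/4+\epsilon}\bigr)$, not a power of $\log N$. This is exactly what the paper obtains for its analogous factor $G(s)$ in the proof of Lemma~2, namely $|G(s)|\ll e^{\sqrt[4]{\log N}}$ on $\sigma\ge-1/4$. The repair is straightforward and does not change the conclusion: since the exponent $1/4<1/2$ and $\log N\ll\log R$, the factor $\exp\bigl(C(\log N)^{1/4+\epsilon}\bigr)\ll\exp\bigl(C'(\log R)^{1/4+\epsilon}\bigr)$ is still dominated by the gain $\exp\bigl(-c\sqrt{\log R}\bigr)$ from the contour shift (after shrinking $c_1$). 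You should replace the $(\log N)^{O(1)}$ claim and the final absorption sentence accordingly; as written they assert a bound on $H$ that is false for $N$ with many small prime factors.

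One further small point: for the second estimate you invoke ``the truncated Perron formula for $F(s,N)R^s/s$'' without mentioning the truncation error. Unlike the $1/s^2$ kernel, the $1/s$ kernel is only conditionally convergent, so you need to carry the standard Perron remainder $O\bigl(\sum_d|a_d|\min(1,R/(T|d-R|))+\cdots\bigr)$ and show it is $\ll e^{-c_1\sqrt{\log R}}$ for your choice $T=\exp(c_2\sqrt{\log R})$; the paper's proof of Lemma~2 does this explicitly and the same computation works here. Again this is a matter of writing out a routine step rather than a genuine gap.
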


Our next lemma has similar flavor as Lemma~\ref{GYlemma}. It can be viewed as a quantitative refinement of~\cite[Proposition 3.2, Proposition 3.3]{MuV} by Ram Murty and Vatwani. 

\begin{lemma}\label{serieslemma}
For any positive even integer $N$ and any real number $R$ satisfying $\log N\ll\log R$, there exists some constant $c_2 > 0$ such that 
\begin{equation}\label{series}
\mathcal{A}(N)
\sum_{\substack{d \le R \\ (d, N) = 1}} 
\frac{\mu(d) \log(1/d) g_N(d)}{\phi(d)} 
= \mathfrak{S}(N)+
O\left(e^{-c_2 \sqrt{\log R}} \right),
\end{equation}
where 
\begin{equation}\label{gNd}
g_N(d) = \prod_{\substack{p \mid d \\ p \nmid N}}  \frac{(p - 1)^2}{p^2 - p - 1}. 
\end{equation}
\end{lemma}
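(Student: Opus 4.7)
The proof strategy is to reduce \eqref{series} to Lemma~\ref{GYlemma} via a convolution decomposition that decouples $g_N(d)$ from the M\"obius sum. For any prime $p \nmid N$ one has
$$g_N(p) = \frac{(p-1)^2}{p(p-1)-1} = 1 - \beta(p), \qquad \text{where } \beta(p) := \frac{p-2}{p(p-1)-1},$$
so extending $\beta$ multiplicatively gives $g_N(d) = \sum_{e \mid d} \mu(e) \beta(e)$ for any squarefree $d$ coprime to $N$. Combined with $\mu(d) = \mu(e)\mu(d/e)$ and $\phi(d) = \phi(e)\phi(d/e)$ for a coprime factorization $d = ef$, substituting this identity and interchanging summation recasts the left side of \eqref{series} as
$$\mathcal{A}(N) \sum_{\substack{e \le R \\ (e,N)=1}} \frac{\beta(e)}{\phi(e)} \sum_{\substack{f \le R/e \\ (f, eN)=1}} \frac{\mu(f)\log(1/(ef))}{\phi(f)}.$$

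Applying Lemma~\ref{GYlemma} to the inner sum, via the splitting $\log(1/(ef)) = \log((R/e)/f) - \log R$, gives
$$\sum_{\substack{f \le R/e \\ (f, eN)=1}} \frac{\mu(f)\log(1/(ef))}{\phi(f)} = \mathfrak{S}(eN) + O\left(\log R \cdot \exp(-c\sqrt{\log(R/e)})\right).$$
The principal contribution therefore becomes $\mathcal{A}(N) \sum_{(e,N)=1} \beta(e)\mathfrak{S}(eN)/\phi(e)$, where the sum is over squarefree $e$. For even $N$ the factorization $\mathfrak{S}(eN) = \mathfrak{S}(N) \prod_{p \mid e,\, p > 2}(1 + 1/(p-2))$ (valid because $(e, N) = 1$) converts this into a pure Euler product over primes $p > 2$ with $p \nmid N$, whose local factor simplifies to $1 + 1/(p(p-1) - 1) = (1 - 1/(p(p-1)))^{-1}$; multiplying by $\mathcal{A}(N)$ yields exactly $\mathfrak{S}(N)$. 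For odd $N$ both sides vanish, since $\beta(2) = 0$ forces $e$ to be odd, and then $\mathfrak{S}(eN) = 0$.

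For the error analysis one uses $\beta(e)/\phi(e) \ll e^{-2+\epsilon}$ on squarefree $e$, so $\sum_e \beta(e)/\phi(e)$ converges absolutely. The truncation tail $\sum_{e > R}$ contributes only $O(R^{-1+\epsilon})$, while the main error $\log R \sum_{e \le R} |\beta(e)/\phi(e)| \exp(-c\sqrt{\log(R/e)})$ is controlled by splitting at $e = R^{1/2}$: the range $e \le R^{1/2}$ uses the uniform bound $\exp(-c\sqrt{(\log R)/2})$, while the range $e > R^{1/2}$ uses the tail estimate $\sum_{e > R^{1/2}} e^{-2+\epsilon} \ll R^{-1/2+\epsilon}$. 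Both contributions are absorbed into $O(\exp(-c_2\sqrt{\log R}))$ for a suitable $c_2 > 0$. The main obstacle is the multiplicative identity underpinning the main-term computation: the delicate cancellation of local Euler factors across the definitions of $\mathfrak{S}$, $\mathcal{A}$, and $g_N$ at each prime $p > 2$ with $p \nmid N$ is precisely what makes the lemma plausible, and once this is verified the rest of the argument is standard.
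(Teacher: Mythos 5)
Your proof is correct, but it takes a genuinely different route from the paper. The paper treats the sum $\sum_{d\le R} a_d$ with $a_d = \mu(d)g_N(d)\log(1/d)/\phi(d)$ directly as a truncated Dirichlet series: it writes the associated generating function as $f(s) = \zeta(s+1)^{-1}G(s)$, applies Perron's formula, shifts the contour into the zero-free region of $\zeta$, and picks up the residue $f'(0) = G(0) = \mathfrak{S}(N)/\mathcal{A}(N)$ at $s=0$; the $N$-dependence is controlled via explicit bounds $|G(s)|, |G'(s)| \ll e^{\sqrt[4]{\log N}}\log N$. You instead decouple $g_N$ by the multiplicative identity $g_N(d) = \sum_{e\mid d}\mu(e)\beta(e)$ (on squarefree $d$) and reduce the whole thing to Lemma~\ref{GYlemma} applied with modulus $eN$ and level $R/e$, then sum the resulting $\mathfrak{S}(eN)$ against $\beta(e)/\phi(e)$ and collapse the Euler product to $\mathfrak{S}(N)/\mathcal{A}(N)$. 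Your route is more elementary in that it contains no contour integration, transferring the analytic work entirely to Lemma~\ref{GYlemma}; the trade-off is that it leans on uniformity that should be made explicit. Specifically, when you invoke Lemma~\ref{GYlemma} with $(eN, R/e)$ for $e\le R^{1/2}$, you are using that the constant $c_1$ and the implied $O$-constant in that lemma are uniform over all moduli $M$ with $\log M \ll \log(R/e)$ (here $\log(eN) \le (\tfrac12 + O(1))\log R \ll \log(R/e)$, so the implied constant is bounded); this is true for the Goldston--Y{\i}ld{\i}r{\i}m estimate, which ultimately comes from the same absolute zero-free region, but as stated the lemma's wording leaves room for ambiguity and you should say a word about it. Likewise, for $e > R^{1/2}$ you implicitly replace Lemma~\ref{GYlemma} by the trivial bound $\ll (\log R)^2$ on the inner $f$-sum before invoking $\sum_{e>R^{1/2}} e^{-2+\epsilon} \ll R^{-1/2+\epsilon}$; this is fine but worth stating. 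The algebra in the main term is correct: for squarefree $e$ coprime to $2N$ one has $\tfrac{\beta(e)}{\phi(e)}\prod_{p\mid e}\tfrac{p-1}{p-2} = \prod_{p\mid e}\tfrac{1}{p(p-1)-1}$, summing over all such $e$ gives $\prod_{p\nmid N,\,p>2}\bigl(1-\tfrac{1}{p(p-1)}\bigr)^{-1} = \mathcal{A}(N)^{-1}$, and for odd $N$ both sides vanish since $\beta(2)=0$ forces $e$ odd hence $\mathfrak{S}(eN)=0$.
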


\begin{proof}
Let 
\[
f_N(s)=\sum_{\substack{d=1 \\ (d, N) = 1}} ^\infty
\frac{\mu(d)g_N(d)}{\phi(d)d^{s}}.
\]
Noting that the series on the right hand side admits the Euler product
\[
f_N(s)=\prod_{p\nmid N}\left(1-\frac{g_N(p)}{(p-1)p^s}\right)=\prod_{p\nmid N}\left(1-\frac{p-1}{(p^2-p-1)p^s}\right),
\]
we immediately see that $f_N(s)$ converges absolutely for $\Re(s)>0$ and therefore defines an analytic function there. Actually we may write 
\[
f_N(s)=\zeta(s+1)^{-1}h_N(s), 
\]
where
\[
h_N(s)=\prod_{p\mid N}\left(1-\frac1{p^{s+1}}\right)^{-1}\prod_{p\nmid N}
\left(\left(1-\frac1{p^{s+1}}\right)^{-1}\left(1-\frac{p-1}{(p^2-p-1)p^s}\right) \right).
\]
Then, it is readily verified that $h_N(s)$ is analytic for $\Re(s)>-1$. So $f_N(s)$ can be meromorphically continued to $\Re(s)>-1$ with only poles at zeros of $\zeta(s+1)$.

Hence, 
\[
f_N'(s)=\sum_{\substack{d=1 \\ (d, N) = 1}} ^\infty
\frac{\mu(d)\log(1/d)g_N(d)}{\phi(d)d^{s}}=\frac{h_N'(s)}{\zeta(s+1)}-h_N(s)\frac{\zeta'(s+1)}{\zeta(s+1)^2}
\]
is also a meromorphic function for $\Re(s)>-1$ with only poles at zeros of $\zeta(s+1)$. Let 
\[
a_N(d)=\begin{cases}
\frac{\mu(d)\log(1/d)g_N(d)}{\phi(d)},& \textrm{if }(d,N)=1,\\
0,& \textrm{otherwise}.
\end{cases}
\]
Thus noting that $g_N(d)\ll1$ and $\phi(d)\gg d/\log \log d$ by~\cite[Theorem \RN{1}.5.6]{Ten}, we have
\[
|a_N(d)|\ll \frac{(\log d)^2}d.
\]
Here we adopt the conventional notation $s=\sigma+it$.
By Perron's formula (\cite[Corollary 5.3]{MV}), we have
\[
\sum_{d\leq R}a_N(d)=\frac{1}{2\pi i}\int_{\sigma_0-iT}^{\sigma_0+iT}f_N'(s)\frac{R^s}{s}\,ds+E_N,
\]
where $\sigma_0>0$ and 
\[ 
E_N\ll\sum_{\frac{R}{2}<d<2R}|a_N(d)|\min\left(1,\frac{R}{T|d-R|}\right)+
\frac{4^{\sigma_0}+R^{\sigma_0}}{T}\sum_{d=1}^{\infty}\frac{|a_N(d)|}{d^{\sigma_0}}.
\]
Suppose that $2\le T\le R$ and that $\sigma_0=\frac1{\log R}$. Then we obtain via some elementary calculation that
\[
E_N\ll \frac{(\log R)^3}{T}.
\]

Let $\sigma_1=-c_2/\log T$, where $c_2$ a small positive constant to be chosen later.
The integrand $f_N'(s)\frac{R^s}{s}$ is a meromorphic function for $\Re(s)>-1$ with a simple pole at $s=0$.  It follows from the residue theorem that
\begin{align*}
&\frac{1}{2\pi i}
\int_{\sigma_0-iT}^{\sigma_0+iT}
f_N'(s)\frac{R^s}{s} \, ds \\
&= \text{Res}_{s=0}
\left(f_N'(s)\frac{R^s}{s}\right)+
\frac{1}{2\pi i}
\left(\int_{\sigma_0-iT}^{\sigma_1-iT}+\int_{\sigma_1-iT}^{\sigma_1+iT}+\int_{\sigma_1+iT}^{\sigma_0+iT}\right)
f_N'(s)\frac{R^s}{s} \, ds.
\end{align*}
First of all, it is easy to check that for positive even integers $N$ we have
\[
\text{Res}_{s=0}\left(f_N'(s)\frac{R^s}{s}\right)=f_N'(0)=h_N(0)= \frac{\mathfrak{S}(N)}{\mathcal{A}(N)}.
\]
Recalling the classical zero-free region of the zeta function, we will choose $c_2$ small enough such that $\zeta(s)$ is nonvanishing when
\[
\sigma\ge1-\frac{c_2}{\log |t|}, \quad |t|\ge 2, 
\]
and moreover, in this region we have
\[
\frac1{\zeta(s)}\ll \log|t|,
\quad
\frac{\zeta'(s)}{\zeta(s)}\ll \log |t|.
\]
Also when $\sigma\ge-\frac14$, we have
\small
\begin{align*}
\log|h_N(s)|\ll -\sum_{p|N}\log\left(1-\frac1{p^{\sigma+1}}\right)
\ll \sum_{p|N}\frac1{p^{\sigma+1}}
\ll \sum_{p<2\log(2N)}\frac1{p^{\sigma+1}}
\ll(\log N)^{\frac14},
\end{align*}
\normalsize
therefore
\[
|h_N(s)|\ll e^{\sqrt[4]{\log N}}
\]
and
\begin{align*}
    |h_N'(s)|&\ll e^{\sqrt[4]{\log N}}\sum_{p|N}\frac{\log p}{p^{\sigma+1}}
    \ll e^{\sqrt[4]{\log N}} \log N.
\end{align*}
So when $\sigma\ge\sigma_1$ and $|t|\le T$, we have
\[
|f_N'(s)|\ll e^{\sqrt[4]{\log N}} (\log N + \log T)^2\ll e^{c_3\sqrt[4]{\log R}}
\]
in view of the assumption that $\log N\ll \log R$.
Then
\[
\int_{\sigma_1+iT}^{\sigma_0+iT}
f_N'(s)\frac{R^s}{s} \, ds
\ll e^{c_3\sqrt[4]{\log R}}\frac{R^{\sigma_0}}T(\sigma_0-\sigma_1)
\ll \frac{e^{c_3\sqrt[4]{\log R}}}T.
\]
Similarly, we may treat the integral from $\sigma_0-iT$ to $\sigma_1-iT$. Lastly, we observe that
\begin{align*}
&\int_{\sigma_1-iT}^{\sigma_1+iT}
f_N'(s)
\frac{R^s}{s} \, ds \\
&\ll 
e^{c_3\sqrt[4]{\log R}}
R^{\sigma_1}
\left(\int_{-T}^T\frac{1}{|t|+1} \, dt
+\int_{-1}^1\frac1{|\sigma_1-it|} \, dt\right)\\
&\ll e^{c_3\sqrt[4]{\log R}}R^{\sigma_1}\left(\log T+\sigma_1^{-1}\right)\\
&\ll e^{c_3\sqrt[4]{\log R}} R^{\sigma_1}\log T.
\end{align*}
Combining the above estimates, we choose $T=e^{\sqrt{c_2\log R}}$ and hitherto obtain
\begin{align*}
\sum_{d\leq R}a_N(d)
=&\frac{\mathfrak{S}(N)}{\mathcal{A}(N)}
+O\left((\log R)^3\left(\frac1T+R^{-\frac{c_2}{\log T}}e^{c_3\sqrt[4]{\log R}}\right)\right) \\
=& \frac{\mathfrak{S}(N)}{\mathcal{A}(N)}
+ O \left(
(\log R)^3 e^{-\sqrt{c_2\log R}+c_3\sqrt[4]{\log R}} 
\right) \\
=& \frac{\mathfrak{S}(N)}{\mathcal{A}(N)}
+ O \left(
e^{-c_2\sqrt{\log R}}
\right),
\end{align*}
as we may take $c_2<1$.
Since $\mathcal{A}(N)\le 1$, this completes the proof of the lemma. 
\end{proof}

The following two lemmas are variations of some results in~\cite{MuV}. They will be applied in Section~\ref{S1section} and Section~\ref{S2section}, respectively.
\begin{lemma}\label{S11}
Let $A > 0$ and $0 < \theta < 1$ be two positive real numbers. Then 
\[
\sum_{d \leq N^\theta} \mu(d)  \log(1/d)
\sum_{\substack{b \leq B \\ (b d, N) = 1}}
\frac{\mu(b)}{\phi([b^2, d])}
= \mathfrak{S}(N)
+ O \left( \frac{1}{(\log N)^A} \right)
\]
holds for all positive even integers $N$, where $B = (\log N)^{A + 4}$.
\end{lemma}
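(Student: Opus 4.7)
The plan is to swap the order of summation, extend the inner $b$-sum to infinity via an Euler-product computation, apply Lemma~\ref{serieslemma} to the resulting main term, and then estimate the tail from truncating at $B$ using Mertens-type bounds.

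Since $\mu(d)\neq 0$ and $\mu(b)\neq 0$ force $d$ and $b$ to be squarefree, a direct computation gives $[b^2,d]=b^2d/(b,d)$ and therefore $\phi([b^2,d])=b\phi(b)\phi(d)/\phi((b,d))$. The first key step is to evaluate the infinite sum
\[
T_\infty(d):=\sum_{\substack{b\geq 1\\(b,N)=1}}\frac{\mu(b)}{\phi([b^2,d])}.
\]
Splitting $b=b_1b_2$ with $b_1=(b,d)$ and $(b_2,d)=1$, multiplicativity yields
\[
T_\infty(d)=\frac{1}{\phi(d)}\Bigl(\sum_{b_1\mid d}\frac{\mu(b_1)}{b_1}\Bigr)\Bigl(\sum_{(b_2,dN)=1}\frac{\mu(b_2)}{b_2\phi(b_2)}\Bigr)=\frac{1}{d}\prod_{p\nmid dN}\Bigl(1-\frac{1}{p(p-1)}\Bigr).
\]
Since $N$ is even and $(d,N)=1$, factoring $\mathcal{A}(N)$ out of this product and rearranging via $\phi(d)=\prod_{p\mid d}(p-1)$ gives exactly $T_\infty(d)=\mathcal{A}(N)g_N(d)/\phi(d)$. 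Applying Lemma~\ref{serieslemma} with $R=N^\theta$ then produces
\[
\sum_{\substack{d\leq N^\theta\\(d,N)=1}}\mu(d)\log(1/d)\,T_\infty(d)=\mathfrak{S}(N)+O\bigl(e^{-c_2\sqrt{\theta\log N}}\bigr)=\mathfrak{S}(N)+O\bigl((\log N)^{-A}\bigr).
\]

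For the tail from $b>B$, I would interchange summation and bound absolutely. Using the formula for $\phi([b^2,d])$ and again splitting $d=d_1d_2$ with $d_1=(b,d)$, $(d_2,b)=1$, the tail contribution is
\[
\ll\sum_{b>B}\frac{\mu(b)^2}{b\phi(b)}\sum_{d_1\mid b}\sum_{\substack{d_2\leq N^\theta/d_1\\(d_2,b)=1}}\frac{\mu(d_2)^2(\log d_1+\log d_2)}{\phi(d_2)}.
\]
Standard Mertens-type estimates reduce the inner double sum to $O(\tau(b)(\log b+1)(\log N)^2)$, and combining with $\phi(b)\gg b/\log\log b$ and $\tau(b)\ll b^\epsilon$ yields a tail bound of order $(\log N)^2/B^{1-\epsilon}$. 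Choosing $B=(\log N)^{A+4}$ makes this $O((\log N)^{-A})$, as required.

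I expect the main obstacle to be the Euler-product identification of $T_\infty(d)$ with $\mathcal{A}(N)g_N(d)/\phi(d)$: the primes split into those dividing $N$, those dividing $d$ but not $N$, and those dividing neither, and all three classes must be tracked precisely to recover the exact factors demanded by Lemma~\ref{serieslemma}. The tail estimate itself is routine once $B$ is a sufficiently large power of $\log N$, which is precisely the reason for the hypothesis $B=(\log N)^{A+4}$.
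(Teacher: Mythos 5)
Your proposal is correct and follows essentially the same route as the paper: rewrite $\phi([b^2,d])$ for squarefree $b,d$, extend the $b$-sum to infinity, identify the resulting Euler product with $\mathcal{A}(N)g_N(d)/\phi(d)$, invoke Lemma~\ref{serieslemma} with $R=N^\theta$, and bound the $b>B$ tail by $O((\log N)^{-A})$. The only cosmetic differences are in how the Euler product is organized (you split $b=b_1b_2$ with $b_1=(b,d)$, while the paper writes $\prod_{p\nmid N}\bigl(1-\phi((p,d))/(p(p-1))\bigr)$) and in the tail bound (you split $d$ over divisors of $b$ and use $\tau(b)\ll b^\epsilon$, while the paper uses $(b,d)=\sum_{r\mid(b,d)}\phi(r)$), neither of which changes the substance of the argument.
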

\begin{proof}
The proof is similar to the proof of the asymptotic formula for 
\[
S_{11}(\beta, x, h) = 
\sum_{d \leq \beta} \mu(d)  \log(1/d)
\sum_{\substack{b \leq B \\ (b d, h) = 1}}
\frac{\mu(b)}{\phi([b^2, d])}
\]
defined on~\cite[Page 653]{MuV}, where $x$ is a large real number, $\beta = x^{\theta''}$ for some $0 < \theta'' < 1$, and $h \neq 0$ is a fixed even integer. Our case here is trickier, since as shown later in the proof, we need to worry about the rate of convergence of the series in~\eqref{series}, which is taken care of by Lemma~\ref{serieslemma}. 

For square-free integers $b \leq B$ and $d \leq N^{\theta}$, we have $\left(b^2, \frac{d}{(b, d)} \right) = 1$ and $\left((b, d), \frac{d}{(b, d)}\right) = 1$, and thus 
\[
\phi([b^2, d]) = \phi\left(b^2 \frac{d}{(b, d)}\right) = \frac{b \phi(b) \phi(d)}{ \phi((b, d))}.
\] 
So
\small
\[
\sum_{d \leq N^\theta} \mu(d)  \log(1/d)
\sum_{\substack{b \leq B \\ (b d, N) = 1}}
\frac{\mu(b)}{\phi([b^2, d])}
= 
\sum_{d \leq N^\theta} 
\frac{\mu(d)  \log(1/d)}{\phi(d)}
\sum_{\substack{b \leq B \\ (b d, N) = 1}}
\frac{\mu(b) \phi((b, d))}{b \phi(b)}. 
\]
\normalsize
We extend the sum over $b$ to infinity with an error
\small
\begin{align*}
\sum_{d \leq N^\theta} 
\frac{\mu(d)  \log(1/d)}{\phi(d)}
\sum_{\substack{b > B \\ (b d, N) = 1}}
\frac{\mu(b) \phi((b, d))}{b \phi(b)} 
\ll& 
\log N 
\sum_{d \leq N^\theta} 
\frac{\mu^2(d)}{\phi(d)}
\sum_{b > B}
\frac{\mu^2(b) (b, d)}{b \phi(b)}. 
\end{align*}
\normalsize
We apply $\displaystyle (b, d) = \sum_{r \mid(b, d)} \phi(r)$, and then make the  substitution $b = b_1 r$ and $d = d_1 r$ to obtain that
\begin{align*}
&\sum_{d \leq N^\theta} 
\frac{\mu^2(d)}{\phi(d)}
\sum_{b > B}
\frac{\mu^2(b)(b, d)}{b \phi(b)} 
=
\sum_{d \leq N^\theta} 
\frac{\mu^2(d)}{\phi(d)}
\sum_{b > B}
\frac{\mu^2(b)}{b \phi(b)} 
\sum_{r \mid (b, d)} 
\phi(r) \\
&\ll 
\sum_{r \leq N^\theta} 
\frac{1}{r \phi(r)}
\sum_{d_1 \leq \frac{N^{\theta}}{r}}
\frac{1}{\phi(d_1)}
\sum_{b_1 > \frac{B}{r}}
\frac{1}{b_1 \phi(b_1)} 
\ll \frac{(\log N)^2 \log \log N}{B},
\end{align*}
where in the last line some summation results of~\cite{Sita} are applied. 
Since $B = (\log N)^{A + 4}$, we have
\begin{align*}
&\sum_{d \leq N^\theta} \mu(d)  \log(1/d)
\sum_{\substack{b \leq B \\ (b d, N) = 1}}
\frac{\mu(b)}{\phi([b^2, d])} \\
&= 
\sum_{d \leq N^\theta} 
\frac{\mu(d)  \log(1/d)}{\phi(d)}
\sum_{\substack{b \geq 1 \\ (b d, N) = 1}}
\frac{\mu(b) \phi((b, d))}{b \phi(b)} 
+ O \left( \frac{1}{(\log N)^A} \right) \\ 
&=
\sum_{\substack{d \leq N^\theta \\ (d, N) = 1}} 
\frac{\mu(d)  \log(1/d)}{\phi(d)}
\prod_{p \nmid N}
\left( 
1 - \frac{\phi((p, d))}{p(p - 1)}
\right)
+ O \left( \frac{1}{(\log N)^A} \right) \\
&= 
\mathcal{A}(N) 
\sum_{\substack{d \leq N^\theta \\ (d, N) = 1}} 
\frac{\mu(d)  \log(1/d) g_N(d)}{\phi(d)}
+ O \left( \frac{1}{(\log N)^A} \right), 
\end{align*}
where $\mathcal{A}(N)$ is defined in~\eqref{Ah} and $g_N(d)$ is defined in~\eqref{gNd}. 
Therefore, it follows by Lemma~\ref{serieslemma} that
\begin{align*}
&\sum_{d \leq N^\theta} \mu(d)  \log(1/d)
\sum_{\substack{b \leq B \\ (b d, N) = 1}}
\frac{\mu(b)}{\phi([b^2, d])}\\
&= \mathfrak{S}(N)
+ O\left(e^{-c_2 \sqrt{\log (N^\theta)}} \right)
+ O \left( \frac{1}{(\log N)^A} \right)\\
&= \mathfrak{S}(N)
+ O \left( \frac{1}{(\log N)^A} \right).
\end{align*}
\end{proof}

\begin{lemma}\label{E4ylemma}
Let $A > 0$ and $0 < \theta < 1$ be two positive real numbers. Suppose the conjecture $EH_\mu(N^{1 - \theta})$ holds, then we have 
\begin{align*} 
\sum_{q \leq N^{1 - \theta}} 
\max_{y < N}
\max_{(a, q) = 1}
& \bigg|
\sum_{\substack{n \leq y \\ n \equiv a \pmod{q}}}
\Lambda(n) 
\mu(N - n)
\log (N - n) \\
& -
\frac{1}{\phi(q)}
\sum_{n \leq y}
\Lambda(n) 
\mu(N - n)
\log (N - n)
\bigg| 
\ll \frac{N}{(\log N)^A}.  
\end{align*}
\end{lemma}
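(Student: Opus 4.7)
The plan is to deduce Lemma~\ref{E4ylemma} from $EH_\mu(N^{1-\theta})$ by partial summation, treating $\log(N-n)$ as a slowly varying weight that can be absorbed at the cost of one extra factor of $\log N$. This is a standard device: the weighted equidistribution follows from the unweighted one whenever the hypothesis holds with arbitrary polylogarithmic decay, which is exactly what $EH_\mu$ provides.

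Concretely, fix $q \leq N^{1-\theta}$ and a residue $a$ with $(a,q)=1$. Setting $f(n) := \Lambda(n)\mu(N-n)$, I would introduce
\[
S(y) := \sum_{\substack{n \leq y \\ n \equiv a \bmod q}} f(n), \qquad T(y) := \frac{1}{\phi(q)} \sum_{n \leq y} f(n), \qquad E(y) := S(y) - T(y).
\]
Abel summation against the smooth weight $b(t) = \log(N-t)$, whose derivative is $-(N-t)^{-1}$, on the interval $[1,y]$ yields
\[
\sum_{\substack{n \leq y \\ n \equiv a \bmod q}} f(n)\log(N-n) = S(y)\log(N-y) + \int_1^y \frac{S(t)}{N-t}\,dt,
\]
and similarly with $T$ in place of $S$. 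Subtracting gives
\[
\Delta(y,a,q) := E(y)\log(N-y) + \int_1^y \frac{E(t)}{N-t}\,dt.
\]

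Since $S$, $T$, and hence $E$ are piecewise constant in $y$, the maximum over $y<N$ is attained at some integer $y \in \{1,\dots,N-1\}$, for which $0 \leq \log(N-y) \leq \log N$ and $\int_1^y (N-t)^{-1}\,dt = \log(N-1)-\log(N-y) \leq \log N$. Writing $E^*(a,q) := \max_{y<N}|E(y,a,q)|$, I conclude
\[
\max_{y<N}|\Delta(y,a,q)| \;\leq\; 2(\log N)\, E^*(a,q).
\]

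Summing over $q \leq N^{1-\theta}$ and taking the max over $(a,q)=1$, I invoke $EH_\mu(N^{1-\theta})$ with the parameter $A+1$ in place of $A$ (legitimate since the conjecture is assumed for every $A>0$) to obtain
\[
\sum_{q \leq N^{1-\theta}} \max_{y<N}\max_{(a,q)=1} |\Delta(y,a,q)| \;\ll\; (\log N)\cdot\frac{N}{(\log N)^{A+1}} \;=\; \frac{N}{(\log N)^A},
\]
which is the claim. The only mildly delicate point is to control the boundary term $\log(N-y)$ when $y$ is close to $N$, and this is resolved by the piecewise-constancy observation above. Apart from that, the argument is a routine reduction; there is no genuine obstacle, since the full strength of $EH_\mu$ (valid for every exponent $A$) effortlessly absorbs the single logarithmic loss introduced by partial summation.
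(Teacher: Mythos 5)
Your proposal is correct and follows essentially the same route as the paper: the paper likewise decomposes the progression sum as a main term $M(y)$ plus an error $E_\mu(y,q,a)$, applies Abel summation against the weight $\log(N-t)$ (whose derivative is $-(N-t)^{-1}$), bounds the resulting boundary and integral terms by $O(\max_{t\le y}|E_\mu(t,q,a)|\log N)$, reassembles the main term as $\phi(q)^{-1}\sum_{n\le y}\Lambda(n)\mu(N-n)\log(N-n)$, and then invokes $EH_\mu(N^{1-\theta})$ to absorb the extra factor of $\log N$. Your explicit observation that the maximum over $y<N$ is attained at an integer, guaranteeing $0\le\log(N-y)\le\log N$, is a clean way of handling the boundary issue that the paper leaves implicit.
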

\begin{proof}
The proof is similar to the proof of~\cite[Proposition 4.2]{MuV}. 
For any $(a, q) = 1$ and $y < N$, denote 
\[
\sum_{\substack{n \leq y \\ n \equiv a \pmod{q}}}
\Lambda(n) \mu(N - n)
= M(y) + E_\mu(y, q, a), 
\]
where 
\[
M(y) = \frac{1}{\phi(q)}
\sum_{n \leq y} \Lambda(n) \mu(N - n). 
\]
By partial summation, we have
\small
\begin{align*}
&\sum_{\substack{n \leq y \\ n \equiv a \pmod{q}}}
\Lambda(n) \mu(N - n) \log(N - n) \\
&= 
\left(
\sum_{\substack{n \leq y \\ n \equiv a \pmod{q}}}
\Lambda(n) \mu(N - n)
\right)
\log (N - y) 
+ \int_2^y \frac{1}{N - t} 
\left(
\sum_{\substack{n \leq t \\ n \equiv a \pmod{q}}}
\Lambda(n) \mu(N - n) 
\right) \, dt \\
&= M(y)\log(N - y)
+ \int_2^y \frac{M(t)}{N - t} \, dt 
+ O \left(
\max_{t \leq y}
|E_\mu(t, q, a)|
\log N
\right). 
\end{align*}
\normalsize
Using partial summation again, the sum of the first two terms above is
\[
\frac{1}{\phi(q)}
\sum_{n \leq y}
\Lambda(n) \mu(N - n) \log(N - n). 
\]
Now the lemma immediately follows from our assumption  $EH_\mu(N^{1 - \theta})$. 
\end{proof}

\section{Proof of Theorem~\ref{maintheorem} and Corollary~\ref{coro}}
\label{proofofmaintheorem}
By the M\"{o}bius inversion formula, we have 
\begin{equation}\label{Lambda}
\Lambda(n)
= \sum_{d \mid n} \mu(d) \log (1/d). 
\end{equation}
As in~\eqref{R0}-\eqref{R3}, we split the sum on the right hand side of~\eqref{Lambda} into $\Lambda_{\alpha}(n)$ and $\widetilde{\Lambda}_{\alpha}(n)$, where  
\[
\Lambda_{\alpha}(n) = \sum_{\substack{d \mid n \\ d \leq {\alpha}}} \mu(d) \log(1/d), 
\]
\[
\widetilde{\Lambda}_{\alpha}(n) = \sum_{\substack{d \mid n \\ d > {\alpha}}} \mu(d) \log(1/d), 
\]
and ${\alpha}$ is a real number that will be determined later. Therefore, we have 
\begin{equation}\label{nNminusn}
\sum_{n < N}
\Lambda(n) \Lambda(N - n)
= \sum_{n < N} \Lambda(n) \Lambda_{\alpha}(N - n) 
+ \sum_{n < N} \Lambda(n) \widetilde{\Lambda}_{\alpha}(N - n). 
\end{equation}
Note that if we take ${\alpha} = Q = N^{\frac{1}{2}}(\log N)^{-20}$, then Pan's result implies the first sum on the right hand side of~\eqref{nNminusn} is asymptotic to $R_0(N) + R_2(N) + O \left( \frac{N}{\log N} \right) = \mathfrak{S}(N) N  + O \left( \frac{N}{\log N} \right)$ and the second sum on the right hand side of~\eqref{nNminusn} is asymptotic to $R_1(N) + R_3(N) = R_3(N) + O \left( \frac{N}{\log N} \right)$. This indicates that the sum $\sum_{n < N} \Lambda(n) \widetilde{\Lambda}_{\alpha}(N - n)$ is harder to deal with, which represents the main obstacle we are facing. 

To make the estimate of $\sum_{n < N} \Lambda(n) \widetilde{\Lambda}_{\alpha}(N - n)$ more accessible, we only sum up those $n < N$ such that $N - n$ is square-free, as has been done  on~\cite[Page 647]{MuV}. Since $\Lambda(N - n) = \mu^2(N - n) \Lambda(N - n)$ except when $n = N - p^{\ell_1} > 0$ for some prime~$p$ and integer $\ell_1 \geq 2$, and there are only $O\left(N^{\frac{1}{2}} \log N\right)$ such $n < N$, we can write~\eqref{nNminusn} as
\small
\begin{align}\label{S1S2}
&\sum_{n < N}
\Lambda(n) \Lambda(N - n) \nonumber \\
&= \sum_{n < N} \Lambda(n) \mu^2(N - n) \Lambda(N - n) + O\left(N^{\frac{1}{2}} (\log N)^3\right) \nonumber \\
&= \sum_{n < N} \Lambda(n) \mu^2(N - n) \Lambda_{\alpha}(N - n)
+ \sum_{n < N} \Lambda(n) \mu^2(N - n) \widetilde{\Lambda}_{\alpha}(N - n)
+ O\left(N^{\frac{1}{2}} (\log N)^3\right) \nonumber \\
&= S_1({\alpha}) + S_2({\alpha}) + O\left(N^{\frac{1}{2}} (\log N)^3\right), 
\end{align}
\normalsize
where
\[
S_1({\alpha}) 
= 
\sum_{n < N} \Lambda(n) \mu^2(N - n) \sum_{\substack{d \mid (N - n) \\ d \leq {\alpha}}} \mu(d) \log(1/d)
\]
and 
\begin{align*}
S_2({\alpha}) 
=& 
\sum_{n < N} \Lambda(n) \mu^2(N - n)
\sum_{\substack{d \mid (N - n) \\ d > {\alpha}}} \mu(d) \log(1/d) \\
=&
\sum_{n < N} \Lambda(n) \mu^2(N - n)
\sum_{\substack{k \mid (N - n) \\ k < \frac{N - n}{{\alpha}}}} \mu\left(\frac{N - n}{k}\right) \log\left(\frac{k}{N - n}\right)
.
\end{align*}

The $\mu^2(N - n)$ term, i.e., the assumption that we only deal with square-free $N - n$, will make it easier when we evaluate $S_2({\alpha})$, since we can apply the equation $\mu^2(N - n) \mu\left( \frac{N - n}{k} \right) = \mu(N - n) \mu(k)$ later in~\eqref{e4.1} when $N - n$ is square-free and $k \mid (N - n)$. Meanwhile, the term $\mu^2(N - n)$ does not change the asymptotic behavior of $S_1({\alpha})$ as shown in the next subsection. 

\subsection{Evaluation of $S_1({\alpha})$ using $EH(N^{\theta} (\log N)^{2A + 8})$} \label{S1section}
In this subsection we prove that \begin{equation}\label{S1yresult}
S_1({\alpha}) = \mathfrak{S}(N) N + O\left( \frac{N}{(\log N)^A} \right)
\end{equation}
under the assumption $EH(N^{\theta} (\log N)^{2A+8})$. 

Since
\[
\mu^2(N - n) = \sum_{b^2 \mid (N - n)} \mu(b)
= \begin{cases}
1, & \text{ if $N - n$ is square-free,}\\
0, &\text{ otherwise,}
\end{cases}
\]
we have
\begin{align}\label{S1}
S_1({\alpha})
=& \sum_{n < N} \Lambda(n) \mu^2(N - n) 
\sum_{\substack{d \mid (N - n) \\ d \leq {\alpha}}} \mu(d) \log(1/d) \nonumber \\
=& 
\sum_{n < N} \Lambda(n) 
\sum_{b^2 \mid (N - n)} \mu(b)
\sum_{\substack{d \mid (N - n) \\ d \leq {\alpha}}}
\mu(d) \log(1/d) \nonumber \\
=& 
\sum_{d \leq {\alpha}} \mu(d) \log(1/d)
\sum_{\substack{b < \sqrt{N} \\ [b^2, d] < N}}
\mu(b)
\sum_{\substack{n < N \\ n \equiv N \pmod{[d, b^2]}}} \Lambda(n). 
\end{align}
We truncate the sum over $b < \sqrt{N}$ into two parts $b \leq B$ and $B < b < \sqrt{N}$ with $B = (\log N)^{A + 4}$. Let $(b, d) = \delta_1$, $b = b_2 \delta_1$, and $d = d_2 \delta_1$. Then $[b^2, d] = b_2^2 d_2 \delta_1^2$. For the sum over $B < b < \sqrt{N}$, we have 
\begin{align*}
& \sum_{d \leq {\alpha}} \mu(d) \log(1/d)
\sum_{\substack{B < b < \sqrt{N} \\ [b^2, d] < N }}
\mu(b)
\sum_{\substack{n < N \\ n \equiv N \pmod{[b^2, d]}}} \Lambda(n) \\
&\ll 
(\log N)^2
\sum_{d < N} 
\sum_{\substack{b > B \\ [b^2, d] < N}}
\left( \frac{N}{[b^2, d]} + 1 \right) \\
&\ll 
(\log N)^2
\sum_{\delta_1 < N}
\sum_{d_2 < \frac{N}{\delta_1}} 
\sum_{\substack{b_2 > \frac{B}{\delta_1} \\ (b_2)^2 d_2 (\delta_1)^2 \leq N}}
\frac{N}{(b_2)^2 d_2 (\delta_1)^2} \\
&\ll 
N (\log N)^2
\sum_{d_2 \leq N} \frac{1}{d_2} 
\sum_{b_2 \leq \sqrt{N}} \frac{1}{(b_2)^2}
\sum_{\delta_1 > \frac{B}{b_2}}
\frac{1}{(\delta_1)^2} \\
&\ll \frac{N (\log N)^4}{B} 
\ll \frac{N}{(\log N)^A}. 
\end{align*}
Thus we can write~\eqref{S1} as
\begin{equation}\label{bdsum}
S_1({\alpha}) = 
\sum_{d \leq {\alpha}} \mu(d) \log(1/d)
\sum_{\substack{b \leq B \\ [b^2, d] < N}} \mu(b)
\sum_{\substack{n < N \\ n \equiv N \pmod{[b^2, d]}}} \Lambda(n)
+ O\left( \frac{N}{(\log N)^A} \right). 
\end{equation}
For $b$ or $d$ that are not relatively prime to $N$ on the right hand side of~\eqref{bdsum}, suppose $(b d, N) = \delta_2 > 1$. If an integer $n < N$ satisfies $\Lambda(n) \neq 0$ and $n \equiv N \pmod{[b^2, d]}$, then $n$ is a prime power $n = p^{\ell_2}$ for some integer $\ell_2 \geq 1$, and moreover, we have $n \equiv p^{\ell_2} \equiv \delta_2 \frac{N}{\delta_2} \pmod{\delta_2 \frac{[b^2, d]}{\delta_2}}$. This implies $p \mid \delta_2$ and thus $p \mid N$. Therefore, taking ${\alpha} = N^{\theta}$ for a fixed constant $0 < \theta < 1$, we know 
\begin{align*}
& \sum_{d \leq {\alpha}} \mu(d) \log(1/d)
\sum_{\substack{b \leq B \\ [b^2, d] < N \\ (b d, N) > 1}} \mu(b)
\sum_{\substack{n < N\\ n \equiv N \pmod{[b^2, d]}}} \Lambda(n) \\
&\ll
\log N
\sum_{d \leq {\alpha}}  
\sum_{\substack{b \leq B \\ [b^2, d] < N \\ (b d, N) > 1}} 
\sum_{p \mid N}
\sum_{\ell_2 \leq \frac{\log N}{\log p}}
\Lambda\left(p^{\ell_2}\right) \\
&\ll {\alpha} B (\log N)^2
\ll \frac{N}{(\log N)^A}. 
\end{align*}
Note that we can drop the restriction $[b^2, d] < N$ when $b \leq B = (\log N)^{A + 4}$ and $d \leq {\alpha} = N^{\theta}$. Therefore, we have
\begin{align}\label{withE}
S_1({\alpha}) = & 
\sum_{d \leq {\alpha}} \mu(d) \log(1/d)
\sum_{\substack{b \leq B \\ (b d, N) = 1}} \mu(b)
\sum_{\substack{n < N \\ n \equiv N \pmod{[b^2, d]}}} \Lambda(n)
+ O\left( \frac{N}{(\log N)^A} \right) \nonumber \\
= & 
N \sum_{d \leq {\alpha}} \mu(d) \log(1/d)
\sum_{\substack{b \leq B \\ (b d, N) = 1}} \frac{\mu(b)}{\phi([b^2, d])} 
+ E_1({\alpha}) + O\left( \frac{N}{(\log N)^A} \right), 
\end{align}
where
\begin{align*}
E_1({\alpha}) =& 
\sum_{d \leq {\alpha}} \mu(d) \log(1/d)
\sum_{\substack{b \leq B \\ (b d, N) = 1}} \mu(b)
\left(
\sum_{\substack{n < N \\ n \equiv N \pmod{[b^2, d]}}} \Lambda(n)
- \frac{N}{\phi([b^2, d])}
\right) \\
\ll&
\log N
\sum_{d \leq {\alpha}} \mu^2(d)
\sum_{\substack{b \leq B \\ (b d, N) = 1}} \mu^2(b)
\left|
\sum_{\substack{n < N \\ n \equiv N \pmod{[b^2, d]}}} \Lambda(n)
- \frac{N}{\phi([b^2, d])}
\right|. 
\end{align*}
For $b \leq B$ and $d \leq {\alpha}$ that are square-free, let $z = [b^2, d]$. Since the number of pairs of square-free natural numbers $b$ and $d$ such that $[b^2, d] = z$ is bounded by $\tau_3(z)$, the number of ways of writing $z$ as a product of three positive integers,
we can bound the error term $E_1(\alpha)$ as
\begin{align*}
E_1({\alpha}) 
\ll
\log N
\sum_{z \leq {\alpha} B^2} \tau_3(z)
\left|
\sum_{\substack{n < N\\ n \equiv N \pmod{z}}} \Lambda(n)
- \frac{N}{\phi(z)}
\right|. 
\end{align*}
For all $z \leq {\alpha} B^2 = N^\theta (\log N)^{2A + 8}$, since $\displaystyle \sum_{\substack{n < N\\ n \equiv N \pmod{z}}} \Lambda(n) \ll \frac{N \log N}{z}$ and $\frac{N}{\phi(z)} \ll \frac{N \log \log z}{z} \ll \frac{N \log N}{z}$, we have a trivial bound 
\begin{equation*}
\left| 
\sum_{\substack{n < N\\ n \equiv N \pmod{z}}}
\Lambda(n) - \frac{N}{\phi(z)}
\right|
\ll \frac{N \log N}{z}.
\end{equation*}
By the Cauchy-Schwarz inequality, we know
\begin{align*}
E_1({\alpha}) 
\ll
N^{\frac{1}{2}} (\log N)^{\frac{3}{2}}
\left( 
\sum_{z \leq {\alpha} B^2} 
\frac{\tau_3^2(z)}{z}
\right)^{\frac{1}{2}}
\left( 
\sum_{z \leq {\alpha} B^2}
\left|
\sum_{\substack{n < N \\ n \equiv N \pmod{z}}}
\Lambda(n)
- \frac{N}{\phi(z)}
\right|
\right)^{\frac{1}{2}}. 
\end{align*}
By~\cite[(1.80)]{IK} and partial summation, we can bound the first sum by 
\[
\sum_{z \leq {\alpha} B^2} 
\frac{\tau_3^2(z)}{z}
\ll (\log N)^{9}. 
\]
By our assumption $EH(N^\theta (\log N)^{2A+8})$, we bound the second sum by
\small
\begin{align*}
\sum_{z \leq {\alpha} B^2}
\left|
\sum_{\substack{n < N \\ n \equiv N \pmod{z}}}
\Lambda(n)
- \frac{N}{\phi(z)}
\right| 
&\ll
\sum_{q \leq N^\theta (\log N)^{2 A + 8}}
\max_{y \leq N}
\max_{\substack{a \\ (a, q) = 1}}
\left|
\sum_{\substack{n \leq y \\ n \equiv a \pmod{q}}}
\Lambda(n)
- \frac{y}{\phi(q)}
\right| \\
&\ll
\frac{N}{(\log N)^{2A+12}}. 
\end{align*}
\normalsize
Therefore, we obtain 
\[
E_1({\alpha}) \ll 
N^{\frac{1}{2}} (\log N)^{\frac{3}{2}} 
\left( (\log N)^{9}\right)^{\frac{1}{2}} 
\left( \frac{N}{(\log N)^{2A+12}} \right)^{\frac{1}{2}}
\ll \frac{N}{(\log N)^A}. 
\]
Finally, inserting the above bound into~\eqref{withE}, we have
\[
S_1 ({\alpha})
= N \sum_{d \leq {\alpha}} \mu(d) \log(1/d)
\sum_{\substack{b \leq B \\ (b d, N) = 1}} \frac{\mu(b)}{\phi([b^2, d])} 
+ O\left( \frac{N}{(\log N)^A} \right),
\]
which implies~\eqref{S1yresult} after applying Lemma~\ref{S11}.

\subsection{Evaluation of $S_2({\alpha})$ using $EH_\mu(N^{1 - \theta})$} \label{S2section}
In this subsection we show that
\begin{equation}\label{S2yresult}
S_2({\alpha}) = 
- \mathfrak{S}(N) \sum_{n < N}
\Lambda(n) \mu(N - n)
+ 
O\left( \frac{N}{(\log N)^A} \right)
\end{equation} under the assumption  $EH_\mu(N^{1 - \theta})$. 

Suppose $N - n$ is square-free and let $k$ be a divisor of $N - n$ such that $k < \frac{N - n}{{\alpha}}$. Since $\mu^2(N - n) \mu\left( \frac{N - n}{k} \right) = \mu(N - n) \mu(k)$, we have 
\begin{align} \label{e4.1}
S_2({\alpha}) 
=&
\sum_{n < N} \Lambda(n) \mu^2(N - n) \sum_{\substack{d \mid (N - n) \\ d > {\alpha}}} \mu(d) \log(1/d) \nonumber \\
=&
\sum_{n < N} \Lambda(n) \mu^2(N - n) \sum_{\substack{k \mid (N - n) \\ k < \frac{N - n}{{\alpha}}}} \mu\left(\frac{N -n}{k}\right) \log\left(\frac{k}{N - n}\right) \nonumber \\
=& 
\sum_{n < N} \Lambda(n) \sum_{\substack{k \mid (N - n) \\ k < \frac{N - n}{{\alpha}}}} \mu(N - n) \mu(k) \log\left( \frac{k}{N - n} \right) \nonumber \\
=& 
\sum_{k < \frac{N - 1}{{\alpha}}} \mu(k)
\sum_{\substack{n < N\\ n \equiv N \pmod{k}}}
\Lambda(n) 
\mu(N - n)
\log\left( \frac{k}{N - n} \right). 
\end{align}
For those integers $k < \frac{N - 1}{{\alpha}}$ such that $(k, N) = \delta_3 > 1$, suppose an integer $n < N$ satisfies $\Lambda(n) \neq 0$ and $n \equiv N \pmod{k}$, then $n$ is a prime power $n = p^{\ell_3}$ for some integer $\ell_3 \geq 1$, and the condition $n \equiv p^{\ell_3} \equiv N \pmod{k}$ would imply $p \mid \delta_3$ and hence $p \mid N$. Therefore, we have 
\begin{align*}
&\sum_{\substack{k < \frac{N - 1}{{\alpha}} \\ (k, N) > 1}} \mu(k)
\sum_{\substack{n < N \\ n \equiv N \pmod{k}}}
\Lambda(n) 
\mu(N - n)
\log\left( \frac{k}{N - n} \right)\\
&\ll
\log N 
\sum_{k < \frac{N}{{\alpha}}} 
\sum_{p \mid N}
\sum_{\ell_3 \leq \frac{\log N}{\log p}}
\Lambda\left(p^{\ell_3}\right) 
\ll
\frac{N}{{\alpha}} (\log N)^2, 
\end{align*}
which implies
\begin{align}\label{S3-S4}
S_2({\alpha}) =&
\sum_{\substack{k < \frac{N - 1}{{\alpha}} \\ (k, N) = 1}} \mu(k)
\sum_{\substack{n < N\\ n \equiv N \pmod{k}}}
\Lambda(n) 
\mu(N - n)
\log\left( \frac{k}{N - n} \right)
+ O\left( \frac{N}{(\log N)^A} \right) \nonumber \\
=& 
S_3({\alpha})
- S_4({\alpha}) + O\left( \frac{N}{(\log N)^A} \right), 
\end{align}
where
\[
S_3({\alpha}) = 
\sum_{\substack{k < \frac{N - 1}{{\alpha}} \\ (k, N) = 1}} 
\mu(k) \log k
\sum_{\substack{n < N\\ n \equiv N \pmod{k}}}
\Lambda(n) 
\mu(N - n)
\]
and 
\[
S_4({\alpha}) = 
\sum_{\substack{k < \frac{N - 1}{{\alpha}} \\ (k, N) = 1}} \mu(k)
\sum_{\substack{n < N \\ n \equiv N \pmod{k}}}
\Lambda(n) 
\mu(N - n)
\log (N - n). 
\]
To estimate $S_3({\alpha})$, we first write 
\begin{align*}
S_3({\alpha})=
\sum_{\substack{k < \frac{N - 1}{{\alpha}} \\ (k, N) = 1}} 
\frac{\mu(k) \log k}{\phi(k)}
\sum_{n < N}
\Lambda(n) \mu(N - n) 
+ E_3({\alpha}), 
\end{align*}
where
\small
\begin{align*}
E_3({\alpha}) = \sum_{\substack{k < \frac{N - 1}{{\alpha}} \\ (k, N) = 1}} 
\mu(k) \log k
\left(
\sum_{\substack{n < N\\ n \equiv N \pmod{k}}}
\Lambda(n) 
\mu(N - n)
-
\frac{1}{\phi(k)}
\sum_{n < N}
\Lambda(n) \mu(N - n)
\right). 
\end{align*}
\normalsize
Our assumption $EH_\mu(N^{1 - \theta})$ implies that 
\small
\begin{align*}
E_3({\alpha}) 
\ll&
\log N
\sum_{\substack{k < \frac{N - 1}{{\alpha}} \\ (k, N) = 1}} 
\left|
\sum_{\substack{n < N\\ n \equiv N \pmod{k}}}
\Lambda(n) 
\mu(N - n)
-
\frac{1}{\phi(k)}
\sum_{n < N}
\Lambda(n) \mu(N - n)
\right| \\
\ll& 
\log N
\sum_{q \leq N^{1 - \theta}}
\max_{y < N}
\max_{\substack{a \\ (a, q) = 1}}
\left|
\sum_{\substack{n \leq y \\ n \equiv a \pmod{q}}}
\Lambda(n) 
\mu(N - n)
-
\frac{1}{\phi(q)}
\sum_{n \leq y}
\Lambda(n) \mu(N - n)
\right| \\
\ll & \frac{N}{(\log N)^A}. 
\end{align*}
\normalsize
Therefore, we have 
\[
S_3({\alpha})=
\sum_{\substack{k < \frac{N - 1}{{\alpha}} \\ (k, N) = 1}} 
\frac{\mu(k) \log k}{\phi(k)}
\sum_{n < N}
\Lambda(n) \mu(N - n) 
+ O\left( \frac{N}{(\log N)^A} \right). 
\]
By Lemma~\ref{GYlemma}, we may write the sum on the right hand side as
\begin{align*}
&\left( 
- \mathfrak{S}(N)
+ O\left( 
e^{
-c_1 \sqrt{\log \left(N / \alpha \right) }
}\right)
\right)
\sum_{n < N}
\Lambda(n) \mu(N - n)\\
&=
- \mathfrak{S}(N)
\sum_{n < N}
\Lambda(n) \mu(N - n)
+ O\left( 
\frac{N}{
e^{
c_1 \sqrt{\log \left(N / \alpha \right) }
}
}
\right) \\
&=
- \mathfrak{S}(N)
\sum_{n < N}
\Lambda(n) \mu(N - n)
+ O\left( \frac{N}{(\log N)^{A}} \right),
\end{align*}
where in the second line the trivial bound $\sum_{n < N} |\Lambda(n) \mu(N - n)| = O(N)$ is applied in the error term and $c_1$ is a positive constant. So we arrive at the estimate
\begin{equation}\label{S2ywithE2y}
    S_3(\alpha)=- \mathfrak{S}(N)
\sum_{n < N}
\Lambda(n) \mu(N - n)
+ O\left( \frac{N}{(\log N)^{A}} \right).
\end{equation}

Next we treat $S_4({\alpha})$ in a similar fashion. Again, we write 
\begin{align*}
S_4({\alpha}) 
=&
\sum_{\substack{k < \frac{N - 1}{{\alpha}} \\ (k, N) = 1}} \mu(k)
\sum_{\substack{n < N \\ n \equiv N \pmod{k}}}
\Lambda(n) 
\mu(N - n)
\log (N - n) \nonumber \\
=&
\sum_{\substack{k < \frac{N - 1}{{\alpha}} \\ (k, N) = 1}} \frac{\mu(k)}{\phi(k)}
\sum_{n < N}
\Lambda(n) 
\mu(N - n)
\log (N - n) + E_4({\alpha}), 
\end{align*}
where 
\scriptsize
\begin{align*} 
E_4({\alpha}) =& 
\sum_{\substack{k < \frac{N - 1}{{\alpha}} \\ (k, N) = 1}} \mu(k)
\left(
\sum_{\substack{n < N \\ n \equiv N \pmod{k}}}
\Lambda(n) 
\mu(N - n)
\log (N - n)
-
\frac{1}{\phi(k)}
\sum_{n < N}
\Lambda(n) 
\mu(N - n)
\log (N - n)
\right). 
\end{align*}
\normalsize
By Lemma~\ref{E4ylemma}, we have 
\scriptsize
\begin{align*} 
& E_4({\alpha})
\ll
\sum_{\substack{k < \frac{N - 1}{{\alpha}} \\ (k, N) = 1}} 
\left|
\sum_{\substack{n < N \\ n \equiv N \pmod{k}}}
\Lambda(n) 
\mu(N - n)
\log (N - n)
-
\frac{1}{\phi(k)}
\sum_{n < N}
\Lambda(n) 
\mu(N - n)
\log (N - n)
\right| \nonumber \\
&\ll
\sum_{q \leq N^{1 - \theta}} 
\max_{y < N}
\max_{(a, q) = 1}
\left|
\sum_{\substack{n \leq y \\ n \equiv a \pmod{q}}}
\Lambda(n) 
\mu(N - n)
\log (N - n)
-
\frac{1}{\phi(q)}
\sum_{n \leq y}
\Lambda(n) 
\mu(N - n)
\log (N - n)
\right| \nonumber \\
&\ll \frac{N}{(\log N)^A}. 
\end{align*}
\normalsize
Moreover, it follows by Lemma~\ref{GYlemma} that
\[
\left(
\sum_{\substack{k < \frac{N - 1}{{\alpha}} \\ (k, N) = 1}} \frac{\mu(k)}{\phi(k)}
\right)
\left(
\sum_{n < N}
\Lambda(n) 
\mu(N - n)
\log (N - n)
\right) \ll 
\frac{N \log N}
{e^{c_1 \sqrt{\log \left(N / {\alpha} \right)}}} \ll
 \frac{N}{(\log N)^A },
\]
where $c_1$ is a positive constant.

Now, merging the above two estimates yields
\begin{align}\label{E2ybound}
S_4({\alpha}) 
= O \left( \frac{N}{(\log N)^A } \right). 
\end{align}
Therefore, we deduce the desired estimate~\eqref{S2yresult} from~\eqref{S3-S4}-\eqref{E2ybound}.

\subsection{Proof of Theorem~\ref{maintheorem}}
Recall from~\eqref{S1S2} that
\begin{align*}
\sum_{n < N} \Lambda(n) \Lambda(N - n) 
= S_1({\alpha}) + S_2({\alpha}) + O\left(N^{\frac{1}{2}} (\log N)^3\right).
\end{align*}
Moreover, for any $A>0$, if we suppose that $EH(N^\theta(\log N)^{2A+8})$ and $EH_\mu(N^{1 - \theta})$ hold for some constant $0 < \theta < 1$ , then we recall from~\eqref{S1yresult} and~\eqref{S2yresult} that
\[
S_1({\alpha}) = \mathfrak{S}(N) N + O\left( \frac{N}{(\log N)^A} \right)
\]
and
\[
S_2({\alpha}) = 
- \mathfrak{S}(N) \sum_{n < N}
\Lambda(n) \mu(N - n)
+ 
O\left( \frac{N}{(\log N)^A} \right). 
\]
Therefore, we have
\begin{align}\label{LambdanLambdaN-n}
\sum_{n < N} \Lambda(n) \Lambda(N - n) 
= \mathfrak{S}(N)
\left(
N - \sum_{n < N} \Lambda(n) \mu(N - n)
\right)
+ O\left( \frac{N}{(\log N)^A} \right). 
\end{align}
Thus to find a lower bound on $\sum_{n < N} \Lambda(n) \Lambda(N - n)$, we only need an upper bound on $\displaystyle \left|\sum_{n < N} \Lambda(n) \mu(N - n)\right|$. We first apply the triangle inequality and obtain that
\[
\left| 
\sum_{n < N}
\Lambda(n) \mu(N - n)
\right|
\leq 
\sum_{n < N}
\Lambda(n) \mu^2(N - n). 
\]
Since $\displaystyle \mu^2(N - n) = \sum_{b^2 \mid (N - n)} \mu(b)$, we have 
\begin{align}\label{lastbound}
& \sum_{n < N}
\Lambda(n) \mu^2(N - n) \nonumber \\
&= 
\sum_{n < N}
\Lambda(n)
\sum_{b^2 \mid (N - n)}
\mu(b) \nonumber \\
&=
\sum_{b < (N - 1)^{\frac{1}{2}}} \mu(b)
\sum_{\substack{n < N \\ n \equiv N \pmod{b^2}}}
\Lambda(n) \nonumber \\
&= 
\sum_{b \leq N^{\frac{1}{4} - \epsilon}} \mu(b)
\sum_{\substack{n < N \\ n \equiv N \pmod{b^2}}}
\Lambda(n)
+
\sum_{N^{\frac{1}{4} - \epsilon} < b < N^{\frac{1}{2}}} \mu(b)
\sum_{\substack{n < N \\ n \equiv N \pmod{b^2}}}
\Lambda(n).
\end{align}
The sum over $N^{\frac{1}{4} - \epsilon} < b < N^{\frac{1}{2}}$ can be bounded by 
\begin{equation}\label{bigpart}
\sum_{N^{\frac{1}{4} - \epsilon} < b < N^{\frac{1}{2}}} \mu(b)
\sum_{\substack{n < N \\ n \equiv N \pmod{b^2}}}
\Lambda(n)
\ll 
\log N \sum_{N^{\frac{1}{4} - \epsilon} < b \leq N^{\frac{1}{2}}} \frac{N}{b^2}
\ll N^{\frac{3}{4} + \epsilon}. 
\end{equation}
The contribution of those $b\le N^{\frac14-\epsilon}$ such that $(b, N) > 1$ is  
\begin{equation}\label{smallpart}
   \sum_{\substack{b \leq N^{\frac{1}{4} - \epsilon} \\ (b, N) >1}} \mu(b)
\sum_{\substack{n < N\\ n \equiv N \pmod{b^2}}}
\Lambda(n) \ll N^{\frac{1}{4} - \epsilon}.
\end{equation}
On the other hand, for those $b\leq N^{\frac{1}{4} - \epsilon}$ with $(b, N) = 1$, we apply the Bombieri-Vinogradov theorem and obtain
\begin{align}\label{BVbound}
&\sum_{\substack{b \leq N^{\frac{1}{4} - \epsilon} \\ (b, N) = 1}} \mu(b)
\sum_{\substack{n < N\\ n \equiv N \pmod{b^2}}}
\Lambda(n) \nonumber \\
&=
N \sum_{\substack{b \leq N^{\frac{1}{4} - \epsilon} \\ (b, N) = 1}}
\frac{\mu(b)}{\phi(b^2)}
+  
\sum_{\substack{b \leq N^{\frac{1}{4} - \epsilon} \\ (b, N) = 1}}
\mu(b)
\left( 
\sum_{\substack{n < N\\ n \equiv N \pmod{b^2}}}
\Lambda(n)
 - \frac{N}{\phi(b^2)}
\right) \nonumber \\
&=
N \sum_{\substack{b \leq N^{\frac{1}{4} - \epsilon} \\ (b, N) = 1}}
\frac{\mu(b)}{\phi(b^2)}
+  
O \left(
\sum_{\substack{b \leq N^{\frac{1}{4} - \epsilon} \\ (b, N) = 1}}
\left|
\sum_{\substack{n < N\\ n \equiv N \pmod{b^2}}}
\Lambda(n)
 - \frac{N}{\phi(b^2)}
\right|
\right) \nonumber \\
&= 
N \sum_{\substack{b \leq N^{\frac{1}{4} - \epsilon} \\ (b, N) = 1}}
\frac{\mu(b)}{\phi(b^2)}
+  
O \left(
\frac{N}{(\log N)^{A+1}}
\right). 
\end{align}
It is easy to see that the infinite series 
\begin{equation}\label{ANseries}
\sum_{(d, N) = 1} \frac{\mu(d)}{\phi(d^2)}
= \prod_{\substack{p \nmid N \\ p > 2}} \left( 1 - \frac{1}{p(p - 1)} \right)
= \mathcal{A}(N)
\end{equation}
converges
and that  its tail is bounded by 
\begin{equation}\label{tailbound}
\sum_{\substack{b > N^{\frac{1}{4} - \epsilon} \\ (d, N) = 1}} \frac{\mu(d)}{\phi(d^2)}
\ll \sum_{b > N^{\frac{1}{4} - \epsilon} } \frac{\log \log d}{d^2}
\ll N^{- \frac{1}{4} + \epsilon}. 
\end{equation}
Therefore, by~\eqref{lastbound}-\eqref{tailbound}, we have
\begin{align*} 
\sum_{n < N}
\Lambda(n) \mu^2(N - n)
= N \mathcal{A}(N) 
+ 
O \left(
\frac{N}{(\log N)^{A+1}}
\right). 
\end{align*}
Thus, we deduce from~\eqref{LambdanLambdaN-n} that
\begin{align*}
\sum_{n < N} \Lambda(n) \Lambda(N - n)
\geq&
\mathfrak{S}(N) \left(N - N \mathcal{A}(N) 
+ O \left( \frac{N}{(\log N)^{A+1}} \right)
\right) 
+ O \left( \frac{N}{(\log N)^A} \right)\\
\geq&\mathfrak{S}(N) 
\left(1 - \mathcal{A}(N) \right) N
+ O \left( \frac{N}{(\log N)^A} \right),
\end{align*}
where in the last line we have applied the bound
\begin{align*}
\mathfrak{S}(N) 
=& 2 \prod_{p > 2} \left( 1 - \frac{1}{(p - 1)^2} \right)
\prod_{\substack{p \mid N \\ p > 2}} \left( 1 + \frac{1}{p - 2} \right) \\
\ll&
\prod_{2 < m \leq \log N}
\frac{m - 1}{m - 2} \\
\ll& \log N.
\end{align*}

Finally, it follows directly from~\eqref{LambdanLambdaN-n} that the assertions
\[
\sum_{n < N} \Lambda(n) \Lambda(N - n) 
\sim 
\mathfrak{S}(N) N
\]
and
\[
\sum_{n < N} \Lambda(n) \mu(N - n) = o(N)
\]
are equivalent. This completes the proof of Theorem~\ref{maintheorem}. 

\subsection{Proof of Corollary~\ref{coro}}

To prove the corollary, we just take $A=2+\frac{\epsilon}2$ in our theorem and obtain 
\[
\widetilde{r}(N)
\geq 
\mathfrak{S}(N) (1 - \mathcal{A}(N)) N
+ O \left( \frac{N}{(\log N)^{2+\frac{\epsilon}2}} \right).
\]
Since $\mathfrak{S}(N)\gg1$, it suffices to show that
\[
1 - \mathcal{A}(N) \gg \frac1{(\log N)^2}
\]
in order to prove $\widetilde{r}(N)>0$ for all sufficiently large even integers $N$.

Since the least prime not dividing $N$ is at most $2\log N$ when $N$ is large enough, we have
\[
1 - \mathcal{A}(N) 
 = 
1 - \prod_{\substack{p \nmid N \\ p > 2}}
\left( 1 - \frac{1}{p(p - 1)} \right)
> \frac{1}{2 \log N (2\log N - 1)},
\]
which confirms the above claim and therefore completes the proof of the corollary.

\end{document}